\newcommand{\espE}{\mathbb{E} }
\newtheorem{theorem}{Theorem}%[section]
\newtheorem{lemma}[theorem]{Lemma}
\newtheorem{proposition}[theorem]{Proposition}
\newtheorem{remark}[theorem]{Remark}
\def\BibTeX{{\rm B\kern-.05em{\sc i\kern-.025em b}\kern-.08e=
    T\kern-.1667em\lower.7ex\hbox{E}\kern-.125emX}}
\begin{document}

\title{Optimal Control of an Interconnected SDE - Parabolic PDE System}

\author{Gabriel Velho$^{1}$, % <-this % stops a space
\thanks{$^{1}$Universit\'e Paris-Saclay, CNRS, CentraleSup\'elec, Laboratoire des signaux et syst\`emes, 91190, Gif-sur-Yvette, France. This project has received funding from the Agence Nationale de la Recherche (ANR) via grant PANOPLY ANR-23-CE48-0001-01.} %{\tt\small gabriel.velho@l2s.centralesupelec.fr}.}%
Jean Auriol$^{1}$, % <-this % stops a space
%\thanks{$^{1}$Riccardo Bonalli is with Universit\'e Paris-Saclay, CNRS, CentraleSup\'elec, Laboratoire des signaux et syst\`emes, 91190, Gif-sur-Yvette, France. {\tt\small riccardo.bonalli@l2s.centralesupelec.fr}.}%
Islam Boussaada$^{2}$, % <-this % stops a space
Riccardo Bonalli$^{1}$ % <-this % stops a space
%\thanks{$^{1}$Jean Auriol is with Universit\'e Paris-Saclay, CNRS, CentraleSup\'elec, Laboratoire des signaux et syst\`emes, 91190, Gif-sur-Yvette, France. {\tt\small jean.auriol@l2s.centralesupelec.fr}.}%
\thanks{$^{2}$Universit\'e Paris-Saclay, CNRS, CentraleSup\'elec, Inria, Laboratoire des signaux et syst\`emes, 91190, Gif-sur-Yvette, France  \& IPSA, 94200, Ivry sur Seine, France.}%
}

\maketitle
\thispagestyle{empty}

\begin{abstract}
In this paper, we design a controller for an interconnected system where a linear Stochastic Differential Equation (SDE) is actuated through a linear parabolic heat equation.
These dynamics arise in various applications, such as coupled heat transfer systems and chemical reaction processes that are subject to disturbances.
Our goal is to develop a computational method for approximating the controller that minimizes a quadratic cost associated with the state of the SDE component.
To achieve this, we first perform a change of variables to shift the actuation inside the PDE domain and reformulate the system as a linear Stochastic Partial Differential Equation (SPDE). We use a spectral approximation of the Laplacian operator to discretize the coupled dynamics into a finite-dimensional SDE and compute the optimal control for this approximated system. The resulting control serves as an approximation of the optimal control for the original system.
We then establish the convergence of the approximated optimal control and the corresponding closed-loop dynamics to their infinite-dimensional counterparts. Numerical simulations are provided to illustrate the effectiveness of our approach.
\end{abstract}

\maketitle

%%%%%%%%%%%%%%%%%%%%%%%%%%%%%%%%%    Introduction     %%%%%%%%%%%%%%%%%%%%%%%%%%%%%%%%%

\section{Introduction}\label{IN}

%Interest in interconnected systems combining Ordinary Differential Equations (ODEs) and Partial Differential Equations (PDEs) began when..; (talk about interconnected PDE-ODE systems).
Interconnected systems involving parabolic Partial Differential Equations (PDEs) and Ordinary Differential Equations (ODEs) have recently attracted significant interest. In particular, systems where the coupling occurs at one boundary of the PDE and the control input is applied at the other have been a focus of recent research. %\modifJA{Phrase lourde, la casser en deux.} 
These dynamics arise in various applications, such as coupled heat transfer systems and chemical reaction processes \cite{mohammadi_optimal_2015}.

To control these coupled systems, stabilization techniques from parabolic PDE control have been developed. A widely used approach is finite-dimensional approximation \cite{ito_strong_1987, banks_approximation_1994, morris_controller_2020}, which discretizes the system to make control design more tractable. Another effective method is backstepping \cite{krstic_boundary_2008}, which systematically transforms the system into a stable target form. These techniques have been successfully applied to parabolic PDE-ODE systems, as seen in \cite{tang_state_2011, antonio_susto_control_2010, wang_output_2019} for backstepping and \cite{mohammadi_optimal_2015} for the dimension reduction approach.

%In realistic settings, these dynamical systems are frequently subject to disturbances, for instance from measurement noise, parameter uncertainties, or external disturbances \cite{bonalli_sequential_2022}. Such disturbances can substantially impact system dynamics. Therefore, effective mitigation of these uncertainties is critical for the reliability and safety of controlled systems. Stochastic Differential Equations (SDEs) provide a broad and accurate framework for modeling uncertain systems \cite{touzi_optimal}. The use of stochastic control enables the design of robust stabilizing controllers that effectively mitigate the impact of random fluctuations. In particular, the use of optimal stochastic control allows to minimize variance and improve reliability.

In realistic settings, dynamical systems are often subject to disturbances arising from measurement noise, parameter uncertainties, or external influences \cite{bonalli_sequential_2022,lew_risk_2023}. These disturbances can significantly affect the system behavior, making their mitigation essential for ensuring reliability and safety in control applications. Stochastic Differential Equations (SDEs) provide a powerful and flexible framework for modeling such uncertainties \cite{touzi_optimal_2013,bonalli_first_2023}. By leveraging stochastic control techniques, one can design robust stabilizing controllers that effectively mitigate random fluctuations. In particular, optimal stochastic control plays a key role in minimizing variance and improving system reliability \cite{lew_sample_2024}.
%[\CommentGV{A vérifier?}].

In interconnected PDE-ODE systems, accounting for uncertainties in the ODE dynamics is often crucial for achieving both efficiency and robustness. Several methods have been proposed for hyperbolic systems, addressing bounded or model-based noise in the ODE \cite{redaud_output_2024, deutscher_backstepping_2017}, as well as for stochastic differential equations (SDEs) %\RB{Maybe add all our papers? This shows more than just one work exist!} 
\cite{velho_stabilization_2024 ,velho_stabilization_2025}. 
Recently, significant progress has been made in the control of parabolic Stochastic Partial differential Equations (SPDE). In particular, the spectral approximation method has proven effective in stabilizing semilinear systems \cite{wang_predictor_2023, wang_constructive_2023}.
However, to the best of our knowledge, no existing approach specifically addresses noise mitigation in the context of coupled parabolic PDE-SDEs. 
This perspective enables the modeling of a broader class of dynamical systems, including processes governed by diffusion, such as heat transfer phenomena, chemical reactions, or the spread of biological populations.
Furthermore, leveraging the structural properties of parabolic PDEs can lead to more effective control strategies for minimizing stochastic disturbances. For instance, the finite-dimensional approximation technique, which is well-suited for parabolic systems, may not be directly applicable to hyperbolic systems \cite{auriol_late-lumping_2019}.

%\RB{Again, recall why considering parabolic PDEs is useful for practical purposes: what relevant applications can you model with? Add references} 
This study aims to provide the first systematic approach to controlling interconnected parabolic PDE-SDE systems. Specifically, we bridge the gap between deterministic PDE control and stochastic control by integrating methodologies from both topics.
Our contributions are twofold:
\begin{enumerate}
    \item Reformulating the optimal control problem for an interconnected parabolic PDE-linear SDE system as a well-posed Linear Quadratic (LQ) control problem within the SPDE framework.
    \item Extending the finite-dimensional approximation methods used to compute optimal LQ controls for parabolic PDEs \cite{morris_controller_2020, mohammadi_optimal_2015} to the stochastic setting. Specifically, we establish the convergence of the approximated Riccati operator and the resulting controlled dynamics.
\end{enumerate}

%[\CommentGV{I did not talk about the control of SPDE, should I? Note that I did a short review of the literature of the approximation the Riccati operator at the end of section III, should I put it here instead?}]

%\modifJA{Je trouve effectivement que ça manque d'un paragraphe sur le contrôle d'EDPs paraboliques via des méthodes spectrales. A mettre après le 1er paragraphe.}

The remaining of the paper is organized as follows: After recalling some standard notations the problem under consideration is stated in Section \ref{PF}.  In Section \ref{SPDE} it is shown that the considered coupled PDE+SDE can be recast into an SPDE. We give our control strategy in Section \ref{DS} and prove convergence of our control method in Section \ref{CVR}. Finally, some numerical results are shown in Section \ref{SIM}.

%%%%%%%%%%%%%%%%%%%%%%%%%%%%%%%%%  Problem Formulation  %%%%%%%%%%%%%%%%%%%%%%%%%%%%%%%%%

\section{Problem Formulation}\label{PF}

\subsection{Notations}

We assume that we are given a filtered probability space $(\Omega, \mathcal{F} \triangleq (\mathcal{F}_t)_{t \in [0,\infty)}, \mathbb{P})$, %For the sake of clarity in the exposition and without loss of generality, from now on, 
and that stochastic perturbations are due to a one-dimensional Wiener process $W_t$, which is adapted to filtration $\mathcal{F}$.
%\textcolor{blue}{Comme dans \cite{yong_stochastic_1999}}. %We denote $\mathcal{F}_t$ the filtration induced by $W_t$.
%Let $T > 0$ be some given time horizon. For any $r \in \mathbb{N} \backslash \{0\}$, we denote by $L_\mathcal{F}^2([0,T] ; \mathbb{R}^r)$ the set of square integrable processes $P: [0,T]\times\Omega \to \mathbb{R}^r$ that are $\mathcal{F}$--progressively measurable, whereas the subset $C^2_\mathcal{F}([0,T] ; \mathbb{R}^r) \subseteq L_\mathcal{F}^2([0,T] ; \mathbb{R}^r)$ contains processes whose sample paths are continuous. \CommentGV{Add the associated norm + do Hilbert instead of $\mathbb{R}^r$}.
Let $T > 0$ be some given time horizon. For a Hilbert space $H$, we denote by %\RB{Can you replace $[0,T]$ by $0,T$, everywhere in the definition of functional spaces? Example $L_\mathcal{F}^2([0,T] ; H)$ becomes $L_\mathcal{F}^2(0,T ; H)$}
$L_\mathcal{F}^2([0,T] ; H)$ the set of square integrable processes $Z: [0,T]\times\Omega \to H$ that are $\mathcal{F}$--progressively measurable, whereas the subset $C^2_\mathcal{F}([0,T] ; H) \subseteq L_\mathcal{F}^2([0,T] ; H)$ contains processes whose sample paths are continuous. The associated norm is
$$
\Vert Z(\cdot) \Vert_{C^2_\mathcal{F}([0,T] ; H)} =  \espE \left[  \underset{0 \leq t \leq T}{\sup}  \Vert Z(t) \Vert_H^2  \right].
$$
%Let $p \geq 1$ and $Z(\cdot)$ an $H$-valued $\mathcal{F}_t$-measurable stochastic process, 
We also recall the \textit{Burkholder-Davis-Gundy} (B-D-G) inequality for stochastic integrals \cite{marinelli_maximal_2016}: for $p\geq 1$,
\begin{equation}\label{eq:Burkhloder_Davis_Gundy_ineq_stoch_int}
    \hspace{-0.2cm}\espE \left[  \underset{0 \leq t \leq T}{\sup}  \Vert Z(t) \Vert_H^p  \right] \leq C_p \espE \left[ \left( \int_0^T  \Vert Z(t) \Vert_H^p dt \right)^{p/2} \right],
\end{equation}
where $C_p$ is a constant which depends on $p$ uniquely. The spaces of semi-definite and definite positive symmetric matrices in $\mathbb{R}^n$ are denoted by $\mathcal{S}^+_n$ and $\mathcal{S}^{++}_n$, respectively. 
We denote by $H^1(0,1) \triangleq \bigl \{u \in L^2([0,1] ; \mathbb{R}), \ \text{such that} \  u' \in L^2([0,1] ; \mathbb{R})  \bigr \}$. It is a Hilbert space with the scalar product
$$
< u , v >_{H^1} \triangleq \int_0^1 u(x) v(x) dx + \int_0^1 u'(x) v'(x) dx.
$$

\subsection{Control system}

We consider coupled PDE+SDE systems of the form
\begin{equation}\label{eq:systeme_original_coupled}
\left\{ \begin{array}{l} u_t(t,x) = \Delta u(t,x)  +  c u(t,x) \\ 
dX_t = \bigl( A X_t + B u(t,0) \bigr) dt + \bigl( C X_t + D u(t,0) \bigr) dW_t \\
u_x(t,1) = U(t) , \quad u_x(t,0) = 0 \\
u(0,x) = u_0(x), \quad X(0) = X_0
\end{array}  \right.
\end{equation}
in the time-space domain $[0, T] \times [0,1]$, where $T>0$. The state of the system is $(X_t, u(t,\cdot)) \in \mathbb{R}^d \times H^1([0,1])$. The coefficient $c$ is a constant modeling the rate of growth ($c>0$) or decay ($c<0$) of the solution $u$ at each point in space. %\modifJA{Précisier le sens physique de cette constante. Dire qu'elle peut rendre l'EDP instable (peut-être écrire ça après la description physique du système)}, 
The matrices $A \in \mathbb{R}^{q\times q}, B \in \mathbb{R}^{q\times 1}, C \in \mathbb{R}^{q\times q}, D \in \mathbb{R}^{q\times 1}$ are also constant. The control input $U$ takes values in $\mathbb{R}$. 
%\modifJA{Petit doute, c'est bien posé avec juste $H^1$, il faut pas un truc du genre $H^2$ inter $H^1$?}
We denote by $\text{Dom}(\Delta) \triangleq \bigl \{   u \in H^2(0,1), \ u_x(0) = u_x(1) = 0   \bigr \}$.
Finally, the initial condition $(X_0, u_0) \in \mathbb{R}^d \times \text{Dom}(\Delta)$ is assumed to be deterministic.  
Note that the terms $c u(t,x)$ and $A X_t$ can introduce instabilities in the PDE and the SDE if they are positive. 
%[\CommentGV{Maybe do directly the time varying case? Not that much harder + applications}]
%
The class of systems described by~\eqref{eq:systeme_original_coupled} naturally arises in scenarios where a heat equation governs one part of the system—such as a pipe transmitting heat—while it is coupled to a temperature system influenced by random perturbations. A typical example is the temperature regulation of a building, where external factors like fluctuating outdoor temperatures or varying sunlight exposure introduce stochastic disturbances. In these settings, the Neumann-type conditions model a set flow between the systems.  %\RB{You need to add a reference. Also, it is worth add a remark on why Neumann-type conditions are in place} 
We defer the proof of the well-posedness of system~\eqref{eq:systeme_original_coupled} to Section~\ref{SPDE}.

\subsection{Optimal Control Problem}
Our primary objective is to develop a method for computing a control that ensures the reliable regulation of our system. A natural approach is to design controls that minimize the variance of the SDE state \cite{lew_sample_2024}. To achieve this, we consider a quadratic cost of the form
\begin{equation}\label{eq:original_LQ_cost_constant}
J(U) \triangleq \espE \left[ \int_0^T \bigl( X^T_t Q X_t + r U(t)^2 \bigr) dt + X_T^T G X_T \right]  
\end{equation}
where $X_t$ is subject to the dynamic \eqref{eq:systeme_original_coupled}. The matrices $Q$ and $R$ are in $\mathcal{S}_q^+$, and $r>0$ is a positive constant. 
%Note that we do not consider the state of the PDE in the cost, as we focus on controlling the SDE and the PDE is just something the control has to go through. We could without any additional difficulties consider an additional term of the form $\espE \left[ \int_0^T < M u(t,\cdot) , u(t,\cdot)>_{H^1} dt \right]$ where $M$ is a positive self adjoint operator, at the cost of additional notations. We will not do it here. 
%Our objective is to find a numerically practical way to compute an approximation of the control $U^*$ that, given $(u_0,X_0) \in \mathbb{R}^d \times H^1([0,1])$, minimizes $J(U,u_0,X_0)$.
%\RB{Need to mention on what space of controls you minimize over: generally progressive controls, a too huge space for practical implementation} 
%Our objective is to develop a numerically practical method to approximate the optimal control \( U^* \) that minimizes \( J(U, u_0, X_0) \), given \( (u_0, X_0) \in \mathbb{R}^d \times \text{Dom}(\Delta) \).
%\modifJA{Vérifier domaine CI}
To obtain a controller that is both computationally tractable and practical for implementation, we minimize the cost over the set of feedback controls \cite{touzi_optimal_2013}.
To do so, we first perform a change of variables in the PDE, which allows the control to act within the domain rather than at the boundary. 
%This transformation introduces additional structural constraints on the control $U$. 
We then formulate the resulting dynamics within the SPDE framework, using the existing literature to ensure the existence of an optimal control. Finally, we approximate the optimal control using spectral methods and establish the convergence of both the control approximation and the associated cost toward their respective optimal values.

\begin{remark}
    Note that the cost function~\eqref{eq:original_LQ_cost_constant} does not explicitly account for the PDE state $u(t,x)$, as our primary focus is on controlling the SDE, with the PDE modeling the medium through which the control goes through. However, it would be possible (at the cost of additional notational complexities) to incorporate an additional term of the form $\espE \left[ \int_0^T < M u(t,\cdot), u(t,\cdot)>_{H^1} dt \right]$ in~\eqref{eq:original_LQ_cost_constant}, where $M$ is a positive self-adjoint operator. We leave it as a future work. %\RB{Say "leave it as a future work"}
\end{remark}

%%%%%%%%%%%%%%%%%%%%%%%%%%%%%%%%%  SPDE Framework  %%%%%%%%%%%%%%%%%%%%%%%%%%%%%%%%%

\section{Recasting into an SPDE LQ problem}\label{SPDE}
%\CommentGV{Titre à changer (vaut aussi pour ceux d'après).}

\subsection{Change of variable}

In this section, we perform a change of variable in the PDE state as in \cite{katz_finite-dimensional_2021}, which results in the input operator becoming bounded, at the cost of adding regularity on the control input $U$. 
%\CommentGV{Résultats à mettre dans une proposition?} \modifJA{Pas nécessairement. Par contre, il faut mettre une ref pour dire que cette idée n'est pas nouvelle.}
Consider the new PDE state $z(t,x)$ defined by
\begin{equation}\label{eq:change_of_variable_PDE}
    z(t,x) \triangleq u(t,x) - \rho(x) U(t),
\end{equation}
where $\rho \in H^1([0,1])$ is the unique solution to the elliptic equation
\begin{equation}\label{eq:elliptic_PDE_rho}
 \left\{ \begin{array}{l} 
    - \Delta \rho(x) + \bigl( \mu - c \bigr) \rho(x) = 0 \\
    \rho_x(0) = 0, \quad \rho_x(1) = 1
\end{array}  \right.
\end{equation}
where $\mu> c$. The parameter $\mu$ is required to guarantee the well-posedness of equation~\eqref{eq:elliptic_PDE_rho} through the Lax-Milgram theorem \cite[Corollary 5.8]{brezis_functional_2011}. In practice, we want to choose $\mu$ relatively close to $c$, as the change of variable \eqref{eq:dynamic_change_variable_control_U} in the control is proportional to a term $e^{\mu t}$. % \RB{I do not understand this last sentence: what is the change of variable? Point to an enumerated equation?} % \modifJA{Pourquoi ?}. 
The variable $z$ verifies the PDE system
$$
dz(t,x) = \Delta z(t,x) dt + c z(t,x) + \mu \rho(x) U(t) dt - \rho(x) dU(t).
$$
Therefore, by choosing $U(t)$ such that
\begin{equation}\label{eq:dynamic_change_variable_control_U}
\left\{ \begin{array}{l}
    dU(t) = \mu U(t) dt + V(t) dt \\
    U(0) = U_0,
\end{array} \right.
\end{equation}
with $V(t)$ being the new control variable, we obtain the following dynamic on $z$ and $X_t$
\begin{equation}\label{eq:PDE_z_after_control_change_variable_V}
\left\{ \begin{array}{l} z_t(t,x) = \Delta z(t,x)  +  c z(t,x) - \rho(x) V(t) \\ 
dX_t = \bigl( A X_t + B ( z(t,0) - \rho(0) U(t) ) \bigr) dt \\
\hspace{4em} + \bigl( C X_t + D  ( z(t,0) - \rho(0) U(t) ) \bigr) dW_t \\
dU(t) = \mu U(t) dt + V(t) dt \\
z_x(t,1) = 0 , \quad z_x(t,0) = 0, \quad U(0) = U_0 \\
z(0,x) = u_0(x) - \rho(x) U_0, \quad X(0) = X_0.
\end{array}  \right.
\end{equation}
Note that we can retrieve $U$ from the new control $V$ with the formula $U(t) = e^{\mu t} U_0 + \int_0^t e^{\mu (t-s) } V(s) ds$. We keep the parameter $U_0$ undefined for the moment, as we use it to further minimize the cost function~\eqref{eq:original_LQ_cost_constant} later on.

\subsection{Well-posedness of the system}
We now prove the well-posedness of the coupled PDE-SDE system \eqref{eq:PDE_z_after_control_change_variable_V} and state some regularity properties of the solution.

\begin{proposition}\label{prop:well_posedness_coupled_system}
Let $V(\cdot)$ be a stochastic process in $L^2_{\mathcal{F}_t}([0,T], \mathbb{R})$. Then, equation \eqref{eq:PDE_z_after_control_change_variable_V} has a unique solution. Moreover, the state $( z , X)$ enjoy the following regularity:
\small
\begin{align*}
    & \forall \omega \in \Omega, \quad z(\omega)  \in C( [0,T] ; H^1(0,1) ) \ \cap \ L^2( [0,T] ; H^2(0,1) ), \\
    & X_{(\cdot)} \in C^2_{\mathcal{F}}([0,T] ; \mathbb{R}^d)
\end{align*}
\normalsize
\end{proposition}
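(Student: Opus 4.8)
The plan is to exploit the \emph{triangular (cascade) structure} of system~\eqref{eq:PDE_z_after_control_change_variable_V}: the equation for $z$ and the ODE for $U$ are driven only by the control $V$, while the SDE for $X_t$ is in turn driven by the already-determined inputs $z(t,0)$ and $U(t)$. Since there is no feedback from $X$ back into $z$ or $U$, I would solve the three components in sequence: first the (pathwise deterministic) parabolic equation, then the scalar ODE, and finally the genuinely stochastic SDE.

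First I would treat the $z$-equation. The operator $\mathcal{A} \triangleq \Delta + c\,\mathrm{Id}$ with domain $\mathrm{Dom}(\Delta)$ is self-adjoint and bounded above, hence it generates an analytic semigroup $(S(t))_{t \geq 0}$ on $L^2(0,1)$. Crucially, the change of variable~\eqref{eq:change_of_variable_PDE} has converted the boundary actuation into the bounded, internal source term $-\rho(x)V(t)$ with $\rho \in H^1(0,1)$. Fixing $\omega \in \Omega$, the process $V(\omega,\cdot)$ lies in $L^2([0,T];\mathbb{R})$ for almost every $\omega$ (by Fubini, since $V \in L^2_{\mathcal{F}}([0,T];\mathbb{R})$), and the initial datum $z_0 = u_0 - \rho U_0$ belongs to $H^1(0,1)$, the energy space associated with $\mathcal{A}$. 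The mild solution is then given pathwise by the variation-of-constants formula
$$
z(t) = S(t) z_0 - \int_0^t S(t-s)\,\rho\, V(s)\,ds ,
$$
and parabolic maximal regularity for analytic semigroups upgrades it to $z(\omega) \in C([0,T];H^1(0,1)) \cap L^2([0,T];H^2(0,1))$ for a.e.\ $\omega$. Since the formula is linear and continuous in $V$ and involves no It\^o integral, $z(t)$ inherits $\mathcal{F}_t$-adaptedness from $V$.

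Next, the ODE for $U$ is solved explicitly by $U(t) = e^{\mu t}U_0 + \int_0^t e^{\mu(t-s)}V(s)\,ds$, which is $\mathcal{F}$-adapted and pathwise continuous. Using the Sobolev embedding $H^1(0,1) \hookrightarrow C([0,1])$, the trace evaluation $z(t,0)$ is well-defined, pathwise continuous in $t$, and inherits adaptedness from $z$; hence the input $g(t) \triangleq z(t,0) - \rho(0)U(t)$ defines a process in $L^2_{\mathcal{F}}([0,T];\mathbb{R})$. Substituting $g$, the $X$-dynamics reduce to the linear SDE
$$
dX_t = \bigl(A X_t + B g(t)\bigr)dt + \bigl(C X_t + D g(t)\bigr)dW_t ,
$$
whose drift and diffusion are globally Lipschitz in $X$ (uniformly in $(t,\omega)$) with affine forcing $g \in L^2_{\mathcal{F}}$. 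The standard existence-uniqueness theorem for SDEs then yields a unique solution, and the B-D-G inequality~\eqref{eq:Burkhloder_Davis_Gundy_ineq_stoch_int} together with Gr\"onwall's lemma upgrades it to $X \in C^2_{\mathcal{F}}([0,T];\mathbb{R}^d)$.

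I expect the main obstacle to lie not in any single step but in rigorously reconciling the pathwise deterministic PDE analysis with the stochastic framework: one must check that the pathwise solution map $V(\omega) \mapsto z(\omega)$ is jointly measurable, so that $z$ and its trace are genuine $\mathcal{F}$-adapted processes, and that the $H^1$-valued regularity is uniform enough in $\omega$ to guarantee $g \in L^2_{\mathcal{F}}([0,T];\mathbb{R})$ before invoking the SDE theory. The maximal parabolic regularity estimate, which is precisely what elevates the mild solution to the claimed $C([0,T];H^1(0,1)) \cap L^2([0,T];H^2(0,1))$ regularity, is the most delicate deterministic ingredient.
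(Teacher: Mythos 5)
Your proposal is correct and follows essentially the same route as the paper: both exploit the cascade structure to solve the parabolic equation pathwise for each fixed $\omega$, use the one-dimensional embedding $H^1(0,1) \hookrightarrow C([0,1])$ (Morrey) to make sense of the trace $z(t,0)$ and conclude that the forcing $z(t,0) - \rho(0)U(t)$ lies in $L^2_{\mathcal{F}}$, and then invoke the standard existence--uniqueness theorem for linear SDEs with progressively measurable affine forcing. The only difference is the technical packaging of the PDE step --- you use the mild-solution/maximal-regularity formulation for the analytic semigroup generated by $\Delta + c\,\mathrm{Id}$, whereas the paper cites Brezis's variational theorem for the heat equation with $L^2$ source --- and your explicit attention to adaptedness and to the uniformity in $\omega$ of the $H^1$ bound is, if anything, more careful than the paper's own treatment.
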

\begin{proof}
We first prove the statement for the PDE state $z$ since the SDE does not directly act on the PDE.
For all realizations $\omega \in \Omega$, the control law $V(t,\omega)$ is in $L^2([0,T] , \mathbb{R})$, and therefore $(t,x) \mapsto -\rho(x) V(t,\omega)$ is in $L^2([0,T] \times [0,1] , \mathbb{R})$. % \RB{Cite the English version of Brezis'} 
Using~\cite[Thm 10.11]{brezis_functional_2011} and the fact that $z(0,\cdot) \in H^2$, 
%\modifJA{Il faut bien du $H^2$!}
we obtain the existence, uniqueness, and the above stated regularity of $z$. 
Since $z(t,\cdot) \in H^1$ , we have by Morrey's inequality \cite[Thm 9.12]{brezis_functional_2011} that
$$
\mathbb{E}\left[ \int^T_0 \vert z(t,0) \vert \; \mathrm{d}t \right] \leq C \mathbb{E}\left[ \int^T_0 \Vert z(t,\cdot) \Vert_{H^1} \; \mathrm{d}t \right] < \infty ,
$$
for some constant $C>0$, implying that $t \mapsto z(t,0) \in L^2_\mathcal{F}$,
%\RB{Hold on, this tells you that $z(t,0) \in H^{-1}$. When identifying through Gelfand's triplets, $H^{-1}$ is larger than $L^2$... Maybe I am missing something, thus you need to better clarify this point}, 
and therefore $t \mapsto z(t,0) - \rho(0) U(t) \in L^2_\mathcal{F}$. We can now use \cite[Chap. 1 - Thm 6.3]{yong_stochastic_1999} to obtain the existence, uniqueness, and the above stated regularity regularity properties for the SDE solution $X$.
\end{proof}

As stated in the previous proof, $z \mapsto z(0)$ is a continuous linear form on $H^1([0,1])$. By Riesz's theorem \cite[Thm. 4.11]{brezis_functional_2011}, there exists a unique $\gamma_0 \in H^1([0,1])$ such that
$\forall z \in H^1([0,1]), z(0) = <\gamma_0 , z >_{H^1} $. %\modifJA{Produit scalair non défini}
In what follows, we use the notation $z(0) = \gamma_0^* z$ for the sake of conciseness. %\RB{Again, when identifying through Gelfand's triplets, $H^{-1}$ is larger than $L^2$... Maybe I am missing something, thus you need to better clarify this point}

\subsection{Existence of an optimal controller}

By using the SPDE framework, we can now rewrite our optimal control problem as a Linear-Quadratic (LQ) problem in a Hilbert space. Let $\mathcal{H}$ be the Hilbert space $\mathcal{H} \triangleq \mathbb{R}^d \times \mathbb{R} \times H^1(0,1)$. We consider the augmented state $Z_t \triangleq \left( \begin{array}{c} X_t \\ Y_t \\ z_t \end{array} \right) \in \mathcal{H}$, where $Y_t$ stands for the signal $U(t)$. Therefore, we infer the following linear SPDE
\begin{equation}\label{eq:formal_SPDE_equivalent_linear_system_LQ}
\left\{ \begin{array}{l}
dZ_t = \bigl[ ( \Tilde{\Delta} + \Tilde{A} ) Z_t + \Tilde{B} V(t) \bigr]dt + \Tilde{C} Z_t dW_t. \\
Z_0  =  \left( \begin{array}{c} X_0 \\ U_0 \\ u_0 - \rho U_0 \end{array} \right) ,
\end{array} \right.
\end{equation}
with
\begin{align*}
& \Tilde{\Delta} \triangleq \left( \begin{array}{ccc} 0 & 0 & 0 \\ 0 & 0 & 0 \\ 0 & 0 & \Delta \end{array} \right), \quad \Tilde{A} \triangleq \left( \begin{array}{ccc} A & -B\rho(0) & B \gamma_0^* \\ 0 & \mu & 0 \\ 0 & 0 & c \end{array} \right), \\
& \Tilde{B} \triangleq  \left( \begin{array}{c} 0 \\ 1 \\ - \rho \end{array} \right)  , \quad \Tilde{C} \triangleq \left( \begin{array}{ccc} C & -D\rho(0) & D \gamma_0^* \\ 0 & 0 & 0 \\ 0 & 0 & 0 \end{array} \right)
\end{align*}

% \small
% \begin{equation}\label{eq:formal_SPDE_equivalent_linear_system_LQ}
% \left\{ \begin{array}{l}
% dZ_t =  \left( \begin{array}{ccc} 0 & 0 & 0 \\ 0 & 0 & 0 \\ 0 & 0 & \Delta \end{array} \right) \left( \begin{array}{c} X_t \\ Y_t \\ z_t \end{array} \right) dt \\
% \hspace{0.5em} + \left( \begin{array}{ccc} A & -B\rho(0) & B \gamma_0^* \\ 0 & \mu & 0 \\ 0 & 0 & c \end{array} \right) \left( \begin{array}{c} X_t \\ Y_t \\ z_t \end{array} \right) dt + \left( \begin{array}{c} 0 \\ 1 \\ - \rho \end{array} \right) V(t) dt \\
% \hspace{0.5em} + \left( \begin{array}{ccc} C & -D\rho(0) & D \gamma_0^* \\ 0 & 0 & 0 \\ 0 & 0 & 0 \end{array} \right) \left( \begin{array}{c} X_t \\ Y_t \\ z_t \end{array} \right) dW_t \\
% Z_0  =  \left( \begin{array}{c} X_0 \\ U_0 \\ u_0 - \rho U_0 \end{array} \right) ,
% \end{array} \right.
% \end{equation}
% \normalsize
% that we write in the more concise form
% $$
% dZ_t = \bigl[ ( \Tilde{\Delta} + \Tilde{A} ) Z_t + \Tilde{B} V(t) \bigr]dt + \Tilde{C} Z_t dW_t.
% $$
% \modifJA{Matrices non définies}
The cost \eqref{eq:original_LQ_cost_constant} can in turn be expressed as
\small
\begin{equation}\label{eq:original_LQ_cost_constant_SPDE_notation}
J(V,U_0) \triangleq \espE \left[ \int_0^T < \Tilde{Q} Z_t , Z_t>_{\mathcal{H}} dt + < \Tilde{G} Z_t , Z_t>_{\mathcal{H}} \right],
\end{equation}
\normalsize
where $\Tilde{Q}$ and $\Tilde{G}$ are positive self-adjoint operators defined by
$$
\Tilde{Q} \triangleq \left( \begin{array}{ccc} Q & 0 & 0 \\ 0 & r & 0 \\ 0 & 0 & 0 \end{array} \right) , \quad \Tilde{G} \triangleq \left( \begin{array}{ccc} G & 0 & 0 \\ 0 & 0 & 0 \\ 0 & 0 & 0 \end{array} \right).
$$
To guarantee that this LQ problem admits a unique solution, we need to add a regularizing term in the cost function, ensuring strong convexity with respect to the new control variable $V$. We therefore recast the modified cost function $J_\delta$ as
\begin{equation}\label{eq:new_LQ_cost_constant_SPDE_notation_regularized}
J_{\delta}(V,U_0) \triangleq J(V,U_0) + \delta \espE \left[ \int_0^T V(t)^2 dt\right],
\end{equation}
where $\delta > 0$ is a parameter that can be chosen small at will.
In this setting, our control problem is well-posed, and the optimal controller ensures a low-variance state, leading to reliable system regulation. We therefore focus now on finding the solution to
$$
\overline{J}_\delta \triangleq \min_{U_0, V} J_\delta(V, U_0).
$$
We can now use the results from \cite{lu_concise_2021} to obtain the existence and explicit form of the optimal feedback LQ controller.

\begin{proposition}\label{prop:existence_optimal_LQ_control}
There exists a unique optimal controller $\overline{V}$ that minimizes the cost function \eqref{eq:new_LQ_cost_constant_SPDE_notation_regularized} under the dynamic \eqref{eq:formal_SPDE_equivalent_linear_system_LQ}. It is in the form of a feedback law:
\begin{equation}\label{eq:formula_explicit_optimalcontrol_LQ}
\overline{V}(t) = - \delta^{-1} \Tilde{B}^* \Pi(t) Z_t,
\end{equation}
where $\Pi(\cdot)$ is a bounded self-adjoint positive operator in $\mathcal{H}$, defined as the unique mild solution to the operator-valued Riccati equation
\begin{equation}\label{eq:operator_riccati_optimal_LQ}
\left\{ \begin{array}{l}
\Dot{\Pi} + \Pi \bigl( \Tilde{\Delta} +  \Tilde{A} \bigr)  + \bigl( \Tilde{\Delta} +  \Tilde{A}^* \bigr) \Pi \\
\hspace{3em} + \Tilde{C}^* \Pi \Tilde{C} + \Tilde{Q} - \delta^{-1} \Pi \Tilde{B} \Tilde{B}^* \Pi = 0, \\
\Pi(T) = \Tilde{G}.
\end{array} \right.
\end{equation}
Moreover, the minimal cost is given by
\begin{equation}\label{eq:minimal_LQ_cost}
J(\overline{V}, U_0) = < \Pi(0) Z_0 , Z_0>_{\mathcal{H}}.
\end{equation}
\end{proposition}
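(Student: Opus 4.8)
The plan is to recognize \eqref{eq:formal_SPDE_equivalent_linear_system_LQ}--\eqref{eq:new_LQ_cost_constant_SPDE_notation_regularized} as an instance of the abstract linear–quadratic stochastic optimal control problem in a Hilbert space treated in \cite{lu_concise_2021}, and to conclude by verifying that the structural hypotheses of the corresponding existence-and-feedback theorem hold for our data. Concretely, I would first cast the dynamics in the standard form $dZ_t = (\mathcal{A} Z_t + \tilde{B} V(t))\,dt + \tilde{C} Z_t\, dW_t$ on $\mathcal{H}$, with generator $\mathcal{A} \triangleq \tilde{\Delta} + \tilde{A}$, and identify the running and terminal cost operators as $\tilde{Q}$ and $\tilde{G}$ together with the control weight $R \triangleq \delta$ arising from the regularizing term in \eqref{eq:new_LQ_cost_constant_SPDE_notation_regularized}.

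The bulk of the proof is then a sequence of verifications. First, $\tilde{\Delta}$ acts as the Neumann Laplacian on the third component and as zero elsewhere; since the Neumann Laplacian generates a strongly continuous (indeed analytic) semigroup on $H^1(0,1)$, the operator $\tilde{\Delta}$ generates a $C_0$-semigroup on $\mathcal{H}$, and because $\tilde{A}$ is bounded (its only nontrivial entry, $\gamma_0^*$, is bounded on $H^1(0,1)$ by the Riesz representation recalled before the statement), $\mathcal{A} = \tilde{\Delta} + \tilde{A}$ still generates a $C_0$-semigroup by the bounded-perturbation theorem. Second, I would check that $\tilde{B}$ and $\tilde{C}$ are bounded: this is precisely what the change of variable \eqref{eq:change_of_variable_PDE}--\eqref{eq:dynamic_change_variable_control_U} was designed to achieve, since $\tilde{B}$ now maps into $\mathcal{H}$ through the fixed profile $\rho \in H^1(0,1)$ rather than through an unbounded boundary operator, while $\tilde{C}$ involves only $C$, $D\rho(0)$, and the bounded trace $\gamma_0^*$. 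Third, $\tilde{Q}$ and $\tilde{G}$ are bounded, self-adjoint, and nonnegative by their block structure and the assumptions $Q, G \in \mathcal{S}^+_q$, $r > 0$. Finally, and crucially, the control weight equals $\delta > 0$, so the cost is strongly convex in $V$; this coercivity $R \geq \delta I$ is exactly the standing assumption that makes the problem well-posed and guarantees solvability of the Riccati equation in \cite{lu_concise_2021}.

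With these hypotheses in hand, I would invoke the abstract LQ feedback theorem of \cite{lu_concise_2021} to obtain simultaneously: the unique solvability of the operator Riccati equation \eqref{eq:operator_riccati_optimal_LQ} in the mild sense, yielding a bounded self-adjoint nonnegative $\Pi(\cdot)$; the existence and uniqueness of the optimal control together with its feedback representation \eqref{eq:formula_explicit_optimalcontrol_LQ}, in which the gain involves $\delta^{-1} = R^{-1}$; and the value-function identity \eqref{eq:minimal_LQ_cost}, obtained from the completion-of-squares argument underlying that theorem.

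The main obstacle I anticipate is not the invocation itself but matching our data exactly to the precise hypotheses of the theorem in \cite{lu_concise_2021}, in particular confirming that the boundedness of $\tilde{B}$ and $\tilde{C}$ (so that no unbounded control or noise operators appear) and the strict positivity of the control weight are the only nontrivial conditions, the remaining domain and measurability requirements being inherited from Proposition \ref{prop:well_posedness_coupled_system}. A secondary point requiring care is that the minimal-cost formula \eqref{eq:minimal_LQ_cost} should be read for $J_\delta$ rather than $J$ and still carries a dependence on $U_0$ through $Z_0$; one must keep $U_0$ frozen while optimizing over $V$, so that the subsequent outer minimization over $U_0$ reduces to the finite-dimensional problem of minimizing $U_0 \mapsto \langle \Pi(0) Z_0, Z_0 \rangle_{\mathcal{H}}$.
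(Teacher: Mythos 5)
Your proposal is correct and follows essentially the same route as the paper's proof: verify that the Neumann Laplacian generates a $C_0$-semigroup on $H^1(0,1)$ (hence $\Tilde{\Delta}$ does on $\mathcal{H}$), that $\Tilde{A}$, $\Tilde{B}$, $\Tilde{C}$ (and the cost operators) are bounded with control weight $\delta>0$, and then invoke the abstract stochastic LQ feedback theorem of \cite{lu_concise_2021} to obtain the Riccati solution, the feedback law, and the value formula in one stroke. Your additional remarks (bounded-perturbation argument for $\Tilde{\Delta}+\Tilde{A}$, and that \eqref{eq:minimal_LQ_cost} should be read for $J_\delta$ with $U_0$ frozen) are sound refinements of the same argument, not a different approach.
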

\begin{proof}
    As a classic result from functional analysis,t the Laplacian $\Delta$ with domain $\text{Dom}(\Delta) = \{ z \in H^2, z_x(0) = z_x(1) = 0 \} $ generates a strongly continuous semigroup of operators on $H^1$ \cite{brezis_functional_2011}. Since the operators $\Tilde{A}$, $\Tilde{B}$ and $\Tilde{C}$ are bounded, we can apply theorem \cite[Thm 9.1]{lu_concise_2021} to obtain the desired result.
\end{proof}
We can further minimize the optimal cost by choosing an optimal value for the problem parameter $U_0$. For conciseness, we define
\begin{equation}\label{eq:definition_of_L_and_M_conciness_optimal_cost}
L_\rho = \left( \begin{array}{c} 0 \\ 1 \\ - \rho \end{array} \right), \quad M_0 = \left( \begin{array}{c} X_0 \\ 0 \\ u_0 \end{array} \right) ,
\end{equation}
%\CommentGV{Maybe I should define it before and state that this is the functional we want to minimize.} \modifJA{Oui, ces histoires avec la CI du controleur ne sont pas très claires pour le moment.}
\begin{proposition}\label{prop:minimal_value_U_0_LQ}
The cost function $J_\delta(V,Z_0)$ is minimized for the parameters $V = \overline{V}$ as given in \eqref{eq:formula_explicit_optimalcontrol_LQ}, and %$\overline{U}_0$ given by 
\begin{equation}\label{eq:optimal_initial_condition_control_U_0}
\overline{U}_0 = - \frac{< \Pi(0) L_\rho , M_0 >_{\mathcal{H}}}{< \Pi(0) L_\rho , L_\rho >_{\mathcal{H}}}
\end{equation}
if $< \Pi(0) L_\rho , L_\rho >_{\mathcal{H}} \neq 0$, or $U_0 = 0$ otherwise.
\end{proposition}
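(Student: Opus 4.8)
The plan is to reduce the joint minimization over the pair $(V, U_0)$ to a one-dimensional minimization over the scalar $U_0$, exploiting that the inner minimization over $V$ has already been resolved in Proposition~\ref{prop:existence_optimal_LQ_control}. For any fixed initial control value $U_0$, that proposition supplies both the optimal feedback $\overline{V}$ and, crucially, the associated minimal cost in closed form as $<\Pi(0) Z_0, Z_0>_{\mathcal{H}}$ via~\eqref{eq:minimal_LQ_cost}. Here the operator $\Pi$ is independent of $U_0$, since the Riccati equation~\eqref{eq:operator_riccati_optimal_LQ} does not involve the initial datum. Hence, fixing $V = \overline{V}$, it only remains to minimize the function $U_0 \mapsto <\Pi(0) Z_0, Z_0>_{\mathcal{H}}$ over the remaining scalar parameter.

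First I would make explicit the dependence of the initial state on $U_0$. From~\eqref{eq:formal_SPDE_equivalent_linear_system_LQ} and the definitions~\eqref{eq:definition_of_L_and_M_conciness_optimal_cost}, the initial condition decomposes affinely as $Z_0 = M_0 + U_0 L_\rho$. Substituting this into~\eqref{eq:minimal_LQ_cost} and using that $\Pi(0)$ is self-adjoint, the reduced cost becomes the scalar quadratic
\begin{align*}
\phi(U_0) &= <\Pi(0) M_0, M_0>_{\mathcal{H}} + 2 U_0 <\Pi(0) L_\rho, M_0>_{\mathcal{H}} \\
&\quad + U_0^2 <\Pi(0) L_\rho, L_\rho>_{\mathcal{H}}.
\end{align*}
Because $\Pi(0)$ is a positive self-adjoint operator, the leading coefficient $<\Pi(0) L_\rho, L_\rho>_{\mathcal{H}}$ is nonnegative, so $\phi$ is a convex parabola in $U_0$.

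In the non-degenerate case $<\Pi(0) L_\rho, L_\rho>_{\mathcal{H}} > 0$, the parabola is strictly convex, and the unique minimizer follows directly from the first-order condition $\phi'(U_0) = 0$, which rearranges to exactly~\eqref{eq:optimal_initial_condition_control_U_0}. The only subtle point, and the step I expect to require the most care, is the degenerate case $<\Pi(0) L_\rho, L_\rho>_{\mathcal{H}} = 0$, where the formula~\eqref{eq:optimal_initial_condition_control_U_0} is ill-defined. Here I would argue via the positive square root $\Pi(0)^{1/2}$: the vanishing of $\Vert \Pi(0)^{1/2} L_\rho \Vert_{\mathcal{H}}^2$ forces $\Pi(0)^{1/2} L_\rho = 0$, hence $\Pi(0) L_\rho = 0$, which in turn makes the linear coefficient $<\Pi(0) L_\rho, M_0>_{\mathcal{H}} = <\Pi(0)^{1/2} L_\rho, \Pi(0)^{1/2} M_0>_{\mathcal{H}}$ vanish as well. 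Consequently $\phi$ is constant in $U_0$, every choice of $U_0$ is optimal, and in particular the stated choice $U_0 = 0$ is valid. Equivalently, a Cauchy--Schwarz bound for the positive semidefinite bilinear form $(a,b) \mapsto <\Pi(0) a, b>_{\mathcal{H}}$ yields the same conclusion without invoking the square root explicitly.
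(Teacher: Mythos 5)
Your proof is correct and follows essentially the same route as the paper: write $Z_0 = L_\rho U_0 + M_0$, invoke the minimal-cost formula~\eqref{eq:minimal_LQ_cost} from Proposition~\ref{prop:existence_optimal_LQ_control}, and minimize the resulting scalar quadratic in $U_0$. In fact your treatment of the degenerate case is more complete than the paper's, which merely asserts that the cost is constant when $< \Pi(0) L_\rho , L_\rho >_{\mathcal{H}} = 0$, whereas your square-root (or Cauchy--Schwarz) argument actually proves that the linear coefficient $< \Pi(0) L_\rho , M_0 >_{\mathcal{H}}$ must then vanish, which is needed to rule out an unbounded linear cost.
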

\begin{proof}
Remark that $L_\rho$ and $M_0$ are defined such that $Z_0 = L_\rho U_0 + M_0$. Let us define $F(U_0) \triangleq J_\delta(\overline{V}, L_\rho U_0 + M_0)$ the functional we seek to minimize. We have through Proposition \ref{prop:existence_optimal_LQ_control} that 
$$
F(U_0) = < \Pi(0) (L_\rho U_0 + M_0) , (L_\rho U_0 + M_0)>_{\mathcal{H}},
$$
which is a quadratic polynomial in $U_0$. It is straightforward to show that, if $< \Pi(0) L_\rho , L_\rho >_{\mathcal{H}} \neq 0$, then $F(U_0)$ is minimal for $\overline{U}_0$ as defined in \eqref{eq:optimal_initial_condition_control_U_0}. If $< \Pi(0) L_\rho , L_\rho >_{\mathcal{H}} \neq 0$, then $F(U_0)$ is constant and we can choose $\overline{U}_0 = 0$.
\end{proof}

Numerically solving the infinite-dimensional Riccati equation requires discretization \cite{banks_approximation_1994}, and choosing an appropriate method is crucial for ensuring both convergence and feasibility. In the deterministic setting, several numerical approaches have been proposed, including spectral methods that discretize the state by projecting onto the first eigenvectors of the Laplacian operator \cite{banks_approximation_1994, ito_strong_1987, morris_controller_2020}, as well as more general Galerkin methods \cite{kroller_convergence_1991} based on finite element techniques. The Galerkin approach has also been extended to the stochastic setting in \cite{prohl_strong_2022}, and a splitting method was introduced in \cite{damm_numerical_2017} to approximate the Riccati solution.
In the next section, we extend the spectral method to the finite-time LQ control problem for SPDEs, to the best of our knowledge, for the first time.

%%%%%%%%%%%%%%%%%%%%%%%%%%%%%%%%% Discretization of the System %%%%%%%%%%%%%%%%%%%%%%%%%%%%%%%%%

\section{Discretization of the System}\label{DS}

To compute an easy-to-implement numerical approximation of the optimal control \eqref{eq:formula_explicit_optimalcontrol_LQ}, we employ a spectral approach~\cite{morris_controller_2020}.% \RB{Can't you from the beginning be more precise, stating that we are going to implement some sort of Galerkin-type approximations?} 
We start to discretize the system dynamics onto a finite-dimensional subspace $\mathcal{H}_N \subset \mathcal{H}$. We then solve the optimal control problem within $\mathcal{H}_N$ and demonstrate that the resulting finite-dimensional optimal controller provides a good approximation of the true optimal controller. Furthermore, we establish the convergence of both the approximated controller and its associated cost to the exact optimal controller and its corresponding minimal cost, respectively.

\subsection{Finite-Dimensional System}

We consider the Laplacian eigenfunctions $\phi_n$ and eigenvalues $\lambda_n$,  %\modifJA{S'agit-il vraiment des valeurs propres de l'opérateur, et non pas plutôt d'un base ? Le signe est-il correct.} 
verifying
\begin{equation}\label{eq:stationary_PDE_eigenvector_laplacian}
\left\{ \begin{array}{l} - \Delta \phi_n(x)  = \lambda_n \phi_n \\ 
\left(\phi_n\right)_x(0) = 0, \quad \left(\phi_n\right)_x(1) = 0 ,
\end{array}  \right.
\end{equation}
and we arrange these eigenvalues in increasing order, i.e., $0 = \lambda_0 < \lambda_1 < \dots$.
In our case, we can explicitly solve the Sturm-Liouville problem, 
%\modifJA{Parler de solutions au problème de Sturm-Liouville}, 
the solutions being given by
\begin{equation}\label{eq:solutions_eigenvalue_eigenvector_problem}
\lambda_n = \pi^2 n^2 , \quad \phi_n = A_n cos(n \pi x) ,
\end{equation}
with coefficients $A_n = \sqrt{ \frac{2}{1 + \pi^2 n^2} }$ chosen such that $\phi_n$ is a orthonormal basis in $H^1$. 
%By computing it, we obtain . \modifJA{Reformuler un peu}
In what follows, we denote the discretization of $H^1$ with $H^1_N \triangleq \text{Span}(\phi_0, \dots, \phi_N) $, and we accordingly denote by $\mathcal{H}_N \triangleq \mathbb{R}^d \times \mathbb{R} \times H^1_N$ the discretization of $\mathcal{H}$.
We denote by $P_N$ the projection of $\mathcal{H}$ onto $\mathcal{H}_N$, that is given by projecting onto the first $N$ eigenvectors $\phi_0, \dots, \phi_N$. The explicit expression of $P_N$ is
\begin{equation}\label{eq:explicit_expression_projector_P_N}
    P_N \left( \begin{array}{c} X \\  Y \\ z \end{array} \right) = \left( \begin{array}{c} X \\ Y \\  \sum_{n=0}^N < z , \phi_n >_{H^1} \phi_n \end{array} \right)
\end{equation}
This projector is orthogonal, as for $Z \in \mathcal{H}$ and $\phi \in \mathcal{H}_N$
$$  
 < P_N Z , \phi >_{\mathcal{H}_N} = < P_N Z , \phi >_{\mathcal{H}} = < Z , \phi >_{\mathcal{H}}.
$$
The adjoint operator $P_N^*$ is therefore the injection of $\mathcal{H}_N$ back into $\mathcal{H}$. 
We now solve the optimal control problem for the approximation of \eqref{eq:formal_SPDE_equivalent_linear_system_LQ} on $\mathcal{H}_N$. We define the matrices $\Tilde{\Delta}_N = P_N \Tilde{\Delta} P_N^* $, $\Tilde{A}_N = P_N \Tilde{A} P_N^* $ $\Tilde{C}_N = P_N \Tilde{C} P_N^* $, and $\Tilde{B}_N = P_N \Tilde{B} $,
and consider the approximated state $Z_t^N \in \mathcal{H}_N$ that verifies the linear SDE
\begin{equation}\label{eq:formal_SPDE_discretized_linear_system_LQ_finite_dim}
\left \{ \begin{array}{l}
dZ_t^N = \bigl[ ( \Tilde{\Delta}_N + \Tilde{A}_N ) Z_t^N + \Tilde{B}_N V(t) \bigr] dt + \Tilde{C}_N Z_t^N dW_t,\\
Z^N_0 =  P_N Z_0 .
\end{array} \right.
\end{equation}
The different matrices can be written explicitly along the basis of vectors $(\phi_0, \dots, \phi_N)$ as
\small
\begin{align*}
\Tilde{\Delta}_N & = \left( \begin{array}{ccc} 0 & 0 & 0 \\ 0 & 0 & 0 \\ 0 & 0 & - L_N \end{array} \right),~ \Tilde{A}_N = \left( \begin{array}{ccc} A & -B\rho(0) & B \left(\gamma_0^N\right)^T \\ 0 & \mu & 0 \\ 0 & 0 & c I_N \end{array} \right) \\
\Tilde{C}_N &  = \left( \begin{array}{ccc} C & -D\rho(0) & D \left(\gamma_0^N\right)^T \\ 0 & 0 & 0 \\ 0 & 0 & 0 \end{array} \right), \quad \Tilde{B}_N = \left( \begin{array}{c} 0 \\ 1 \\ - \rho^N \end{array} \right) 
\end{align*}
\normalsize
%\modifJA{$L_N$ non défini. Le $\mu$ est-il au bon endroit dans la matrice $\tilde A_N$?} 
with $L_N \triangleq \text{diag}( \lambda_0, \dots \lambda_N) $, $\gamma_0^N \triangleq \bigl( \phi_0(0) , \dots, \phi_N(0) \bigr)^T$ and $\rho^N \triangleq \bigl( < \rho , \phi_0 >_{H^1},~ \dots,~ < \rho , \phi_N >_{H^1} \bigr)^T$.
% \gamma_0^N \triangleq \left( \begin{array}{c} \phi_0(0) \\ \vdots \\ \phi_N(0) \end{array}\right), \quad \rho^N = \left( \begin{array}{c} < \rho , \phi_0 >_{H^1} \\ \vdots \\ < \rho , \phi_N >_{H^1} \end{array}\right)

\subsection{Approximation of the optimal feedback control}
We now define the approximated cost $J^N_\delta(V,Z_0^N)$ as
\begin{equation}\label{eq:new_LQ_cost_constant_SPDE_notation_regularized_approx_finite_dim}
\begin{split}
J^N_{\delta}(V,Z_0) & \triangleq   \espE \left[ \int_0^T \bigl( (Z_t^N)^T \Tilde{Q}_N Z_t^N + \delta V(t)^2 \bigr) dt\right] \\
& \quad + \espE \left[ (Z_T^N)^T \Tilde{G}_N Z_T^N \right],
\end{split}
\end{equation}
where $\Tilde{Q}_N \triangleq P_N \Tilde{Q} P_N^* $ and $\Tilde{G}_N \triangleq P_N \Tilde{G} P_N^* $.
Classic results from optimal linear quadratic control \cite{yong_stochastic_1999} yields the following optimal controller.
\begin{proposition}\label{prop:existence_optimal_LQ_control_finite_dim}
There exists a unique optimal controller $\overline{V}_N$ that minimizes the cost \eqref{eq:new_LQ_cost_constant_SPDE_notation_regularized_approx_finite_dim} under the dynamic \eqref{eq:formal_SPDE_discretized_linear_system_LQ_finite_dim}. Moreover, this optimal controller is given by the feedback law
\begin{equation}\label{eq:formula_explicit_optimalcontrol_LQ_finite_dim}
\overline{V}_N(t) = - \delta^{-1} \Tilde{B}_N^T \Pi_N(t) Z^N_t,
\end{equation}
where $\Pi_N(\cdot)$ is a positive symmetric matrix in $\mathcal{L}(\mathcal{H}_N)$, and is the unique solution to the matrix-valued Riccati equation
\begin{equation}\label{eq:operator_riccati_optimal_LQ_finite_dim}
\left\{ \begin{array}{l}
\Dot{\Pi}_N + \Pi_N \bigl( \Tilde{\Delta}_N +  \Tilde{A}_N \bigr)  + \bigl( \Tilde{\Delta}_N +  \Tilde{A}_N^* \bigr) \Pi_N \\
\hspace{3em} + \Tilde{C}_N^* \Pi_N \Tilde{C}_N + \Tilde{Q}_N - \delta^{-1} \Pi_N \Tilde{B}_N \Tilde{B}_N^* \Pi_N = 0, \\
\Pi_N(T) = \Tilde{G}_N.
\end{array} \right.
\end{equation}
Finally, the associated minimal cost is given by
\begin{equation}\label{eq:minimal_LQ_cost_finite_dim}
J(\overline{V}_N, Z^N_0) = < \Pi_N(0) Z^N_0 , Z^N_0>_{\mathcal{H_N}}.
\end{equation}
\end{proposition}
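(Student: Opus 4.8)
The plan is to recognize \eqref{eq:formal_SPDE_discretized_linear_system_LQ_finite_dim}--\eqref{eq:new_LQ_cost_constant_SPDE_notation_regularized_approx_finite_dim} as a \emph{standard} finite-dimensional stochastic linear-quadratic (SLQ) problem with multiplicative noise and to invoke the classical theory of \cite{yong_stochastic_1999}. After identifying $\mathcal{H}_N$ with $\mathbb{R}^{d+1+(N+1)}$ through the orthonormal basis $(\phi_0,\dots,\phi_N)$—under which $<\cdot,\cdot>_{\mathcal{H}_N}$ is the Euclidean inner product—the dynamics become a linear Itô SDE with constant coefficient matrices $\Tilde{\Delta}_N+\Tilde{A}_N$, $\Tilde{B}_N$, $\Tilde{C}_N$, and the running and terminal costs are governed by $\Tilde{Q}_N$, $\Tilde{G}_N$ together with the control penalty $\delta>0$. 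The essential structural observation is that $\Tilde{Q}_N=P_N\Tilde{Q}P_N^*$ and $\Tilde{G}_N=P_N\Tilde{G}P_N^*$ inherit positive semidefiniteness from $\Tilde{Q}$ and $\Tilde{G}$, while the scalar control weight $\delta$ is strictly positive. This places us in the \emph{standard} SLQ regime in which solvability is guaranteed.

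First I would establish global solvability of the matrix Riccati equation \eqref{eq:operator_riccati_optimal_LQ_finite_dim} on the whole interval $[0,T]$. Because the control weight $\delta>0$ is uniformly positive definite and the state weights are nonnegative, the relevant result (\cite[Chap.~6]{yong_stochastic_1999}) asserts that \eqref{eq:operator_riccati_optimal_LQ_finite_dim} admits a unique symmetric positive semidefinite solution $\Pi_N(\cdot)$ with no finite escape time. The delicate point is the extra quadratic term $\Tilde{C}_N^*\Pi_N\Tilde{C}_N$ produced by the \emph{multiplicative} noise: unlike in the deterministic LQ setting it forbids a naive comparison with a linear Lyapunov equation. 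I would control it through the a priori bound $0\preceq\Pi_N(t)$, which comes from identifying $<\Pi_N(t)\xi,\xi>_{\mathcal{H}_N}$ with the optimal cost-to-go from state $\xi$ at time $t$: this quantity is finite (the choice $V\equiv 0$ already yields a linear SDE with finite second moments on $[0,T]$, hence finite cost), nonnegative, and quadratic in $\xi$, which rules out blow-up.

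Next I would derive the feedback law and the value by completion of squares. Applying Itô's formula to $t\mapsto <\Pi_N(t)Z_t^N,Z_t^N>_{\mathcal{H}_N}$ along \eqref{eq:formal_SPDE_discretized_linear_system_LQ_finite_dim}, using the Riccati identity \eqref{eq:operator_riccati_optimal_LQ_finite_dim} to cancel the drift contributions, and taking expectations (the stochastic integral is a genuine martingale since its integrand is square-integrable on $\mathcal{H}_N$) yields
\begin{equation*}
J^N_\delta(V,Z_0)= <\Pi_N(0)Z_0^N,Z_0^N>_{\mathcal{H}_N} + \espE\!\left[\int_0^T \delta\,\bigl| V(t)+\delta^{-1}\Tilde{B}_N^T\Pi_N(t)Z_t^N \bigr|^2\,dt\right].
\end{equation*}
The integrand is nonnegative and vanishes pointwise precisely for the feedback choice \eqref{eq:formula_explicit_optimalcontrol_LQ_finite_dim}, which simultaneously proves existence, the explicit form of $\overline{V}_N$, and the minimal-cost formula \eqref{eq:minimal_LQ_cost_finite_dim}. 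Uniqueness follows from the strict positivity of $\delta$, which makes the cost strictly convex in $V$.

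I expect the main obstacle to be the rigorous justification of global solvability of the Riccati equation in the presence of the multiplicative term $\Tilde{C}_N^*\Pi_N\Tilde{C}_N$—guaranteeing that $\Pi_N$ does not escape to infinity before $T$—rather than the completion-of-squares step, which is routine once $\Pi_N$ is in hand. Everything else reduces to bookkeeping with the projected matrices $\Tilde{\Delta}_N,\Tilde{A}_N,\Tilde{B}_N,\Tilde{C}_N,\Tilde{Q}_N,\Tilde{G}_N$ and does not rely on any infinite-dimensional structure.
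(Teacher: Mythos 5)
Your proposal is correct and takes essentially the same approach as the paper: the paper gives no proof at all, stating only that ``classic results from optimal linear quadratic control \cite{yong_stochastic_1999} yield'' the proposition, and that classical finite-dimensional SLQ theory is exactly what you reconstruct. The details you supply---global solvability of the Riccati equation via the value-function a priori bound, and the completion-of-squares identity giving the feedback law, the minimal cost, and uniqueness from strict convexity in $V$---are precisely the standard argument behind the result the paper cites.
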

As done in Proposition \ref{prop:minimal_value_U_0_LQ}, one easily shows that there exists an optimal value of $U_0$, denoted $\overline{U}_0^N$, that further minimizes the cost $J(\overline{V}_N, Z^N_0)$. The feedback controller $\overline{V}_N$ is straightforward to compute in practice \cite{yong_stochastic_1999}. We show in the next section that $\overline{V}_N$ is a good approximation of the optimal control $\overline{V}$ when $N$ is big enough. % \modifJA{Quand le nombre de modes est grand ?}.

%%%%%%%%%%%%%%%%%%%%%%%%%%%%%%%%%   Convergence Results  %%%%%%%%%%%%%%%%%%%%%%%%%%%%%%%%%

\section{Convergence Results}\label{CVR}

\subsection{Convergence of the Dynamic Operators}
%\modifJA{Titre pas clair}
In what follows, we denote by $S(\cdot)$ and $S_N(\cdot)$ the strongly continuous semi-group generated respectively by $\Tilde{\Delta}$ and $\Tilde{\Delta}_N$.
It is a known result that the spectral approximation of the Laplacian $\Tilde{\Delta}_N$ %\modifJA{Est-ce que ça suffit ? Que fait-on du c ? Ok, c'est dit après.} 
verifies the Trotter-Kato theorem \cite[Thm 4.3]{morris_controller_2020}. Consequently, we have that 
\begin{equation}\label{eq:Trotter_Kato_consequence_convergence_S}
\forall Z \in \mathcal{H},~ \lim_{n \rightarrow + \infty} \sup_{0 \leq t \leq T} \Vert P_N^* S_N(t) P_N Z - S(t)Z \Vert_\mathcal{H} = 0,
\end{equation}
Moreover, we can verify that all the other bounded operators converge strongly pointwise, that is
\begin{equation}\label{eq:convergence_bounded_operators}
\begin{split}
\forall Z \in \mathcal{H}, \quad & \Vert \Tilde{A} Z - P_N^* \Tilde{A}_N P_N Z \Vert_\mathcal{H} \rightarrow 0,  \\
& \Vert \Tilde{C} Z - P_N^* \Tilde{C}_N P_N Z \Vert_\mathcal{H} \rightarrow 0, \\
&  \Vert \Tilde{B} - P_N^* \Tilde{B}_N \Vert_\mathcal{H} \rightarrow 0.
\end{split}
\end{equation}
These convergences also hold for the adjoints of the operators $A^*$, $B^*$ and $C^*$.  % \modifJA{Pas clair}
%[\CommentGV{Add convergence of the fundamental of the SPDE?}]

\subsection{Convergence of the Feedback Operator}
In this subsection we prove the strong convergence of the feedback operator $\Pi_N$ towards $\Pi$.
\begin{proposition}\label{prop:convergence_of_the_Riccati_operator}
    Let $Z \in \mathcal{H}$. We have that
\begin{equation}\label{eq:convergence_Riccati_operator}
\begin{split}
& \quad \Vert \Pi(t) Z - P_N^* \Pi_N(t) P_N Z \Vert_\mathcal{H} \rightarrow 0, \\
% & \sup_{t \in [0,T]} \Vert \Pi(t) Z - P_N^* \Pi_N(t) P_N Z \Vert_{L^2([0,T] ; \mathcal{H})} \rightarrow 0
& \sup_{t \in [0,T]} \Vert \Pi(t) Z - P_N^* \Pi_N(t) P_N Z \Vert_\mathcal{H} \rightarrow 0.
\end{split} 
\end{equation}
\end{proposition}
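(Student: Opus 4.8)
The plan is to recast both Riccati equations in their mild (integral) form relative to the semigroups $S$ and $S_N$, and then to propagate the strong convergences \eqref{eq:Trotter_Kato_consequence_convergence_S}--\eqref{eq:convergence_bounded_operators} through the resulting expressions by a backward Gronwall argument. Writing $\Tilde{\Delta} + \Tilde{A}$ and treating the unbounded $\Tilde{\Delta}$ through its semigroup while keeping $\Tilde{A}, \Tilde{B}, \Tilde{C}, \Tilde{Q}, \Tilde{G}$ as bounded data, the terminal-value problem \eqref{eq:operator_riccati_optimal_LQ} is equivalent to the mild identity
\begin{equation*}
\Pi(t) = S^*(T-t)\Tilde{G}S(T-t) + \int_t^T S^*(s-t)\,\mathcal{R}(\Pi(s))\,S(s-t)\,ds,
\end{equation*}
with $\mathcal{R}(\Pi) \triangleq \Tilde{Q} + \Tilde{A}^*\Pi + \Pi\Tilde{A} + \Tilde{C}^*\Pi\Tilde{C} - \delta^{-1}\Pi\Tilde{B}\Tilde{B}^*\Pi$, and the finite-dimensional equation \eqref{eq:operator_riccati_optimal_LQ_finite_dim} admits the analogous representation with $S_N, \Tilde{G}_N$, and $\mathcal{R}_N$ built from the projected operators.

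A preliminary and essential step is to establish a bound $\Vert \Pi_N(t)\Vert \leq M$ uniform in $N$ and $t \in [0,T]$. This follows from the optimal-control interpretation \eqref{eq:minimal_LQ_cost_finite_dim}: since $\langle \Pi_N(t)x,x\rangle$ equals the optimal cost-to-go from state $x$ at time $t$, it is dominated by the cost incurred under the admissible choice $V \equiv 0$, which—using the uniform boundedness of $S_N$ and of $\Tilde{A}_N, \Tilde{C}_N, \Tilde{Q}_N, \Tilde{G}_N$ provided by \eqref{eq:Trotter_Kato_consequence_convergence_S}--\eqref{eq:convergence_bounded_operators}—is bounded by $M\Vert x\Vert^2$ for a constant $M$ independent of $N$ and $t$. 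Positivity of $\Pi_N$ then yields the claimed operator bound, which is exactly what makes the quadratic Riccati nonlinearity locally Lipschitz on the set where all the $\Pi_N$ live.

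Fixing $Z \in \mathcal{H}$, I would then estimate $E_N(t) \triangleq \Pi(t)Z - P_N^*\Pi_N(t)P_N Z$ by subtracting the two mild identities. The terminal contribution $S^*(T-t)\Tilde{G}S(T-t)Z - P_N^* S_N^*(T-t)\Tilde{G}_N S_N(T-t)P_N Z$ tends to zero uniformly in $t$ by Trotter-Kato \eqref{eq:Trotter_Kato_consequence_convergence_S} together with the convergence of $\Tilde{G}_N$. Inside the integral I would insert and subtract intermediate terms so as to isolate, on one hand, the ``source'' differences coming from $S$ versus $S_N$ and from $\Tilde{A},\Tilde{C},\Tilde{Q},\Tilde{B}$ versus their projections, and, on the other hand, the recursive contribution proportional to $E_N$ itself. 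Each source term is a composition of operators that converge strongly and are uniformly bounded; since a product of uniformly bounded, strongly convergent sequences converges strongly, and since the uniform bound on $\Pi_N$ furnishes an integrable dominating function, dominated convergence shows that every source term vanishes as $N \to \infty$, uniformly in $t$.

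Collecting the recursive terms, the linear parts ($\Tilde{A}^*E_N$, $E_N\Tilde{A}$, $\Tilde{C}^*E_N\Tilde{C}$) and the quadratic part are all Lipschitz in $E_N$ on the ball $\{\Vert\Pi_N\Vert \leq M\}$, so the estimate takes the form $\Vert E_N(t)Z\Vert \leq \varepsilon_N + K\int_t^T \Vert E_N(s)Z\Vert\,ds$ with $\varepsilon_N \to 0$ and $K$ independent of $N$ and $t$; backward Gronwall then gives $\sup_{t\in[0,T]}\Vert E_N(t)Z\Vert \leq \varepsilon_N e^{KT} \to 0$, establishing both claimed convergences simultaneously. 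The main obstacle is the bookkeeping around the quadratic term $\delta^{-1}\Pi\Tilde{B}\Tilde{B}^*\Pi$: because the operator convergences are only strong and not in norm, one cannot simply pass limits through products, and it is essential to exploit the uniform bound on $\Pi_N$ from the second step, so that the quadratic term decomposes cleanly into a genuinely small source contribution plus a remainder controlled linearly by $E_N$.
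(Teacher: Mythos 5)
Your proposal follows essentially the same route as the paper's proof: both recast $\Pi$ and $\Pi_N$ in mild integral form against the semigroups $S$ and $S_N$, subtract the two identities, use the uniform boundedness of all operators together with the strong convergences \eqref{eq:Trotter_Kato_consequence_convergence_S}--\eqref{eq:convergence_bounded_operators} to make the source terms vanish, and conclude with a backward Gronwall estimate. The only difference is a sub-step: where the paper cites \cite{kroller_convergence_1991} for the uniform bound on $\Vert \Pi_N(t) \Vert$, you derive it directly from the value-function interpretation \eqref{eq:minimal_LQ_cost_finite_dim} by comparing with the zero control, which is a valid, self-contained alternative.
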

\begin{proof}
The Riccati equation \eqref{eq:operator_riccati_optimal_LQ} closely resembles the classical Riccati equation solved in the deterministic setting \cite{morris_controller_2020}, with the main difference being the additional term  $C^* \Pi C $ which is not particularly difficult to handle. However, since we could not find a proof that allows for a direct adaptation, we provide a full proof here using a standard approach.
%\modifJA{Peut-être dire pourquoi les preuves existantes ne sont pas complétemnt satisfaisantes? } % Pourquoi ne pas mettre que la deuxième équation ?}
Let $t \in [0,T]$ and $Z \in \mathcal{H}$, we prove this convergence result through an estimate on the difference $\Pi(t) Z - P_N^* \Pi_N(t) P_N Z$. Let $\tau \in [t,T]$, we define by $F(\tau)Z$ and $F_N(\tau)Z$ the $C^1$ functionals 
\begin{align*}
F(\tau)Z & \triangleq S(\tau - t)^* \Pi(\tau) S(\tau - t)Z, \\
F_N(\tau)Z & \triangleq P_N^* S_N(\tau - t)^* \Pi_N(\tau) S_N(\tau - t) P_N Z. 
\end{align*}
Through the fundamental theorem of analysis, we have
\begin{align*}
& F(T)Z - F(t)Z  = \int_t^T \Dot{F}(\tau) Z d\tau  \\
& \quad = \int_t^T S(\tau - t)^* \bigl( \Tilde{\Delta}^* \Pi(\tau) + \Pi(\tau) \Tilde{\Delta} \bigr) S(\tau - t)Z d\tau \\
& \quad  +    \int_t^T S(\tau - t)^*  \Dot{\Pi}(\tau)  S(\tau - t)Z d\tau.
\end{align*}
Using the Riccati equation \eqref{eq:operator_riccati_optimal_LQ}, and the fact that $F(t) = \Pi(t)$, we obtain
\begin{align*}
& \Pi(t) Z  = S(T - t)^* \Tilde{G} S(T - t)Z \\
& \quad + \int_t^T S(\tau - t)^* \biggl( \Tilde{A}^* \Pi(\tau) + \Pi(\tau) \Tilde{A} \\
& \hspace{5em} + \Tilde{Q} + \Tilde{C}^* \Pi(\tau) \Tilde{C} \\
& \hspace{5em} - \delta^{-1}  \Pi(\tau) \Tilde{B} \Tilde{B}^* \Pi(\tau) \biggr) S(\tau - t)Z d\tau 
\end{align*}
Similarly, we have % [\CommentGV{Optionnel, prend beaucoup de place}]
\begin{align*}
& P_N^* \Pi_N(t) P_N Z  = P_N^* S_N(T - t)^* \Tilde{G}_N S_N(T - t) P_N Z \\
& + \int_t^T S_N(\tau - t)^* \biggl( \Tilde{A}_N^* \Pi_N(\tau) + \Pi_N(\tau) \Tilde{A}_N \\
& \hspace{3em} + \Tilde{Q}_N + \Tilde{C}_N^* \Pi_N(\tau) \Tilde{C}_N \\
& \hspace{3em} - \delta^{-1}  \Pi_N(\tau) \Tilde{B}_N \Tilde{B}_N^* \Pi_N(\tau) \biggr) S_N(\tau - t) P_N Z d\tau.
\end{align*}
All operators $S_N(\cdot), \Pi_N(\cdot), \Tilde{A}_N, \Tilde{B}_N, \Tilde{C}_N, \Tilde{G}_N, \Tilde{Q}_N, P_N$ have an operator norm that is uniformly bounded (for $\Pi_N$, see the method proposed in \cite{kroller_convergence_1991} for a proof). Therefore, there exists a (possibly overloaded) positive constant $K$, depending on the bounds of the operator norms, such that
\small
\begin{align*}
& \Vert \Pi(t) Z - P_N^* \Pi_N(t) P_N Z \Vert_\mathcal{H} \\
& \leq K \int_t^T \Vert \Pi(\tau) Z - P_N^* \Pi_N(\tau) P_N Z \Vert_\mathcal{H} d\tau \\
& \quad + K \Bigl( \Vert \Tilde{A}Z  - P_N^* \Tilde{A}_N P_N Z\Vert_\mathcal{H} + \Vert \Tilde{C}Z  - P_N^* \Tilde{C}_N P_N Z\Vert_\mathcal{H} \\
& \hspace{3em} + \Vert \Tilde{Q}Z  - P_N^* \Tilde{Q}_N P_N Z\Vert_\mathcal{H} + \Vert \Tilde{G}Z  - P_N^* \Tilde{G}_N P_N Z\Vert_\mathcal{H} \\
& \hspace{3em} + \Vert \Tilde{B}\Tilde{B}^* Z  - P_N^* \Tilde{B}_N \Tilde{B}_N^* P_N Z \Vert_\mathcal{H} \\
& \hspace{3em} + \sup_{\tau \in [0,T]} \Vert S(\tau) Z  - P_N^* S_N(\tau) P_N Z\Vert_\mathcal{H} \Bigr).
\end{align*}
\normalsize
%[\CommentGV{Should I add the convergence of the adjoint operators too?}].
We conclude the proof thanks to Gronwall's Lemma, and \eqref{eq:Trotter_Kato_consequence_convergence_S} and \eqref{eq:convergence_bounded_operators}.
\end{proof}

We can now show the convergence of the controller and the optimal cost.

\subsection{Convergence of the Controller and the Optimal Cost}

We show first the strong convergence of the closed-loop state, then of the controller and finally of the optimal cost.

\begin{proposition}\label{prop:convergence_closed_loop_state}
Let $\overline{Z}_t$ be the solution to the closed loop dynamic \eqref{eq:formal_SPDE_equivalent_linear_system_LQ} with the controller $\overline{V}$ defined in \eqref{eq:formula_explicit_optimalcontrol_LQ}, and let $\overline{Z}^N_t$ be the solution of \eqref{eq:formal_SPDE_discretized_linear_system_LQ_finite_dim} with $\overline{V}_N$ defined in \eqref{eq:formula_explicit_optimalcontrol_LQ_finite_dim}. Then
\begin{equation}\label{eq:convergence_closed_loop_state_C2}
    \Bigl \Vert \overline{Z}_t - P_N^* \overline{Z}_t^N \Bigr \Vert_{C^2_\mathcal{F}([0,T]; \mathcal{H})} \rightarrow 0.
\end{equation}
\end{proposition}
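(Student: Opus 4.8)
The plan is to compare the two closed-loop dynamics via their mild (variation-of-constants) representations and to close a Gronwall estimate on the error process $E_t \triangleq \overline{Z}_t - P_N^*\overline{Z}_t^N$. Substituting the feedback laws \eqref{eq:formula_explicit_optimalcontrol_LQ} and \eqref{eq:formula_explicit_optimalcontrol_LQ_finite_dim} into \eqref{eq:formal_SPDE_equivalent_linear_system_LQ} and \eqref{eq:formal_SPDE_discretized_linear_system_LQ_finite_dim}, both closed loops become linear SPDEs with bounded drift generators $G(s) \triangleq \Tilde{A} - \delta^{-1}\Tilde{B}\Tilde{B}^*\Pi(s)$ and $G_N(s) \triangleq \Tilde{A}_N - \delta^{-1}\Tilde{B}_N\Tilde{B}_N^*\Pi_N(s)$, together with the unbounded Laplacian parts generating $S$ and $S_N$. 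Their mild forms are
\[
\overline{Z}_t = S(t)Z_0 + \int_0^t S(t-s)G(s)\overline{Z}_s\,ds + \int_0^t S(t-s)\Tilde{C}\overline{Z}_s\,dW_s,
\]
and the analogue for $\overline{Z}_t^N$ with $S_N, G_N, \Tilde{C}_N$ and initial datum $P_N Z_0$. Applying $P_N^*$ to the discretized representation and subtracting decomposes $E_t$ into an initial-condition term $S(t)Z_0 - P_N^* S_N(t)P_N Z_0$, a drift integral and a stochastic integral.

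First I would take the $\mathcal{H}$-norm, then $\sup_{0\le t\le T}$ and expectation, so as to reconstruct the $C^2_\mathcal{F}$ norm. The initial-condition term vanishes directly by \eqref{eq:Trotter_Kato_consequence_convergence_S}. The stochastic integral is treated with the Burkholder--Davis--Gundy inequality \eqref{eq:Burkhloder_Davis_Gundy_ineq_stoch_int} at $p=2$, which bounds $\espE\sup_t\Vert\int_0^t(\cdot)\,dW_s\Vert_{\mathcal{H}}^2$ by $C\,\espE\int_0^T\Vert(\cdot)\Vert_{\mathcal{H}}^2\,ds$, while the drift integral is handled by Cauchy--Schwarz; both steps reduce the problem to estimating, in mean square, the integrand $S(t-s)\Tilde{C}\overline{Z}_s - P_N^* S_N(t-s)\Tilde{C}_N\overline{Z}_s^N$ and its drift counterpart.

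The heart of the argument is a telescoping of each integrand engineered so that all strong-convergence factors land on the $N$-independent limit state $\overline{Z}_s$, the residuals being absorbed into $\Vert E_s\Vert_{\mathcal{H}}$. For the diffusion term,
\[
S(t-s)\Tilde{C}\overline{Z}_s - P_N^* S_N(t-s)\Tilde{C}_N\overline{Z}_s^N = \bigl[S(t-s)\Tilde{C} - P_N^* S_N(t-s)\Tilde{C}_N P_N\bigr]\overline{Z}_s + P_N^* S_N(t-s)\Tilde{C}_N P_N E_s,
\]
where the last summand is bounded by $K\Vert E_s\Vert_{\mathcal{H}}$ through the uniform operator bounds and therefore feeds Gronwall, and the bracket, acting on the fixed $\overline{Z}_s$, splits into a pure-semigroup piece controlled by Trotter--Kato \eqref{eq:Trotter_Kato_consequence_convergence_S} and an operator piece controlled by \eqref{eq:convergence_bounded_operators} and the completeness of $(\phi_n)$. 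The drift integrand is handled identically, the genuinely new contribution being $\Tilde{B}_N\Tilde{B}_N^*\Pi_N(s)$ versus $\Tilde{B}\Tilde{B}^*\Pi(s)$, whose convergence on $\overline{Z}_s$ is exactly Proposition \ref{prop:convergence_of_the_Riccati_operator} together with \eqref{eq:convergence_bounded_operators}. Collecting everything yields
\[
\espE\,\sup_{u\le t}\Vert E_u\Vert_{\mathcal{H}}^2 \le K\int_0^t \espE\,\sup_{v\le s}\Vert E_v\Vert_{\mathcal{H}}^2\,ds + \varepsilon_N,
\]
and Gronwall's lemma closes the estimate provided $\varepsilon_N\to 0$.

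The main obstacle is that the convergences \eqref{eq:Trotter_Kato_consequence_convergence_S}, \eqref{eq:convergence_bounded_operators} and Proposition \ref{prop:convergence_of_the_Riccati_operator} are only strong (pointwise in $\mathcal{H}$) and not in operator norm, whereas in the integrals they are applied to the $N$-dependent random field $\overline{Z}_s^N$. The telescoping above neutralizes this by forcing every strong-convergence factor onto the fixed limit $\overline{Z}_s$; it then remains to show $\varepsilon_N = \espE\int_0^T(\text{fixed-limit terms})\,ds \to 0$, which I would obtain by dominated convergence, since these integrands tend to $0$ for almost every $(s,\omega)$ and are dominated by $K\Vert\overline{Z}_s\Vert_{\mathcal{H}}^2$, integrable by the $C^2_\mathcal{F}$ regularity of the closed-loop state (Proposition \ref{prop:well_posedness_coupled_system}) and the uniform bounds on $S_N$ and $\Pi_N$ used in Proposition \ref{prop:convergence_of_the_Riccati_operator}.
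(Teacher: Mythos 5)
Your proposal is correct and follows essentially the same route as the paper's proof: mild formulations, the B-D-G/maximal inequality for the stochastic convolution, inserting $P_N P_N^* = I_N$ so that all strong-convergence factors act on the $N$-independent state $\overline{Z}_s$, dominated convergence for the residual terms, and Gronwall's lemma to close the estimate. The only cosmetic difference is that the paper invokes Gibson's Lemma (Lemma \ref{lem:gibson_convergence_operator_times_function}) pointwise in $\omega$ to get uniform-in-time convergence along the continuous paths, whereas you argue directly via pointwise a.e. convergence plus domination, which is an equally valid way to handle that step.
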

\begin{proof}
Similarly to  what has been done in the proof of Proposition \ref{prop:convergence_of_the_Riccati_operator}, we first express $\overline{Z}_t$ and $\overline{Z}_t^N $ as integrals involving uniformly bounded operators. We then bound their differences and establish the desired result using the B-D-G inequality %\modifJA{Rappeler que c'est defini dans les notations.} 
for the stochastic integral  given in equation  \eqref{eq:Burkhloder_Davis_Gundy_ineq_stoch_int}, combined with Gronwall’s Lemma.
We have the following expression for $\overline{Z}_t$,
\begin{align*}
\overline{Z}_t & = S(t)Z_0 +  \int_0^t S(t-s) \Tilde{C} \overline{Z}_t dW_t  \\
& + \int_0^t S(t-s) \bigl( \Tilde{A} - \delta^{-1} \Tilde{B} \Tilde{B}^* \Pi(t) \bigr) \overline{Z}_t dt.
\end{align*}
Similarly,
\begin{align*}
\overline{Z}^N_t & = S_N(t)Z^N_0 +  \int_0^t S_N(t-s) \Tilde{C}_N \overline{Z}^N_t dW_t  \\
& + \int_0^t S_N(t-s) \bigl( \Tilde{A}_N - \delta^{-1} \Tilde{B}_N \Tilde{B}_N^* \Pi_N(t) \bigr) \overline{Z}^N_t dt.
\end{align*}
Let us focus on the stochastic integral. Due to the B-D-G inequality \eqref{eq:Burkhloder_Davis_Gundy_ineq_stoch_int}, there exists $K>0$ such that
\footnotesize
\begin{align*}
& \espE \left[  \sup_{0 \leq s \leq t}  \left \Vert \int_0^s \Bigl( S(s-\tau) \Tilde{C} \overline{Z}_\tau - P_N^*  S_N(s-\tau) \Tilde{C}_N \overline{Z}^N_\tau \Bigr) dW_\tau     \right \Vert^2_\mathcal{H}  \right] \\
& \leq K \int_0^t \espE \left[ \sup_{0 \leq \tau \leq s}  \left \Vert S(s-\tau) \Tilde{C} \overline{Z}_\tau - P_N^*  S_N(s-\tau) \Tilde{C}_N \overline{Z}^N_\tau \right \Vert^2_\mathcal{H}  \right] ds  \\
& \leq K \int_0^t \espE \left[ \sup_{0 \leq \tau \leq s}  \left \Vert \overline{Z}_\tau -  \overline{Z}^N_\tau \right \Vert^2_\mathcal{H}  \right] ds \\ 
& + K \int_0^t \espE \left[ \sup_{0 \leq \tau \leq s}  \left \Vert \bigl( \Tilde{C} - P_N^* \Tilde{C}_N P_N \bigr) \overline{Z}_\tau \right \Vert^2_\mathcal{H}  \right] ds \\
& + K \int_0^t \espE \left[ \sup_{0 \leq \tau \leq s}  \left \Vert \bigl( S(s - \tau) - P_N^* S_N(s - \tau) P_N \bigr) \overline{Z}_\tau \right \Vert^2_\mathcal{H}  \right] ds.
\end{align*}

\normalsize
Note that we can add $P_N P_N^* = I_N$ anywhere in the expression of $\overline{Z}^N_t$ to factorize each term appropriately. To prove the convergence to $0$ of the last two integrals, we use the following lemma from \cite{gibson_riccati_1979}. %\RB{Not clear distinction between this lemma and the rest of the proof, maybe add some space...}
\vspace{0.6em}
\begin{lemma}\label{lem:gibson_convergence_operator_times_function}
\cite[Lem 5.1]{gibson_riccati_1979} Let $\Psi(\cdot) : [0,T] \rightarrow \mathcal{L}(\mathcal{H})$ and for $N \geq 1$, $\Psi_N(\cdot) : [0,T] \rightarrow \mathcal{L}(\mathcal{H})$ such that the operator norm of $\Psi_N(t)$ is uniformly bounded for $N$ and $t$, and $\forall Z \in \mathcal{H} $, $\Psi_N(t)Z \rightarrow \Psi(t)Z$ uniformly in $t$. Let $g : [0,T] \rightarrow \mathcal{H}$ be a continuous function, then
$$\sup_{0\leq t \leq T} \bigl \Vert \Psi_N(t)g(t) - \Psi(t)g(t) \bigr \Vert_\mathcal{H} \rightarrow 0.
$$
\end{lemma}
Since for all realizations $\omega \in \Omega$, the function $t \mapsto \overline{Z}_t(\omega)$ is continuous in $\mathcal{H}$, we can apply the previous lemma pointwise in $\omega$. We then use the boundedness of the operators and of $\overline{Z}_t$ in $C^2_\mathcal{F}([0,T];\mathcal{H})$ to apply the dominated convergence on the integral of the expectation and show convergence to 0.  %  [\CommentGV{Too wordy? Equations are really long though}].
Using this method, we can show that
%\small
\begin{align*}
& \espE \left[ \sup_{0 \leq s \leq t}  \left \Vert \overline{Z}_s -  \overline{Z}^N_s \right \Vert^2_\mathcal{H}  \right] \leq  \Theta_N \\
& \qquad  +  K \int_0^t \espE \left[ \sup_{0 \leq \tau \leq s}  \left \Vert \overline{Z}_\tau -  \overline{Z}^N_\tau \right \Vert^2_\mathcal{H}  \right] ds
\end{align*}
%\normalsize
with $\Theta_N \rightarrow 0$. We can now conclude using Gronwall's Lemma.
\end{proof}
We can now show convergence of the control and of the cost.
\begin{proposition}
The following holds as $N \rightarrow \infty$:
\begin{equation}\label{eq:convergence_optimal_control_and_cost}
\begin{array}{c}
      \Vert \overline{V} - \overline{V}_N \Vert_{C^2_\mathcal{F}([0,T], \mathbb{R})} \rightarrow 0, \\
      \overline{U}_0^N \rightarrow \overline{U}_0 , \\
      J^N_\delta \bigl( \overline{V}_N , \overline{U}_0^N ) \rightarrow  J_\delta \bigl( \overline{V} , \overline{U}_0 ).
\end{array}
\end{equation}
\end{proposition}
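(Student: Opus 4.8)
The plan is to establish the three convergences in the displayed order, reducing each to the two convergence results already proven: the strong convergence of the Riccati operator (Proposition \ref{prop:convergence_of_the_Riccati_operator}) and of the closed-loop state (Proposition \ref{prop:convergence_closed_loop_state}), together with the pointwise operator convergences \eqref{eq:convergence_bounded_operators} and Gibson's Lemma \ref{lem:gibson_convergence_operator_times_function}.

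For the control, I would first rewrite $\overline{V}_N$ in a form directly comparable to $\overline{V}$. Using $\Tilde{B}_N^* = \Tilde{B}^* P_N^*$ and inserting $P_N P_N^* = I_{\mathcal{H}_N}$, one gets $\overline{V}_N(t) = - \delta^{-1} \Tilde{B}^* \bigl( P_N^* \Pi_N(t) P_N \bigr) \bigl( P_N^* \overline{Z}^N_t \bigr)$, so that, writing $\Lambda_N(t) \triangleq P_N^* \Pi_N(t) P_N$, the difference $\delta(\overline{V}(t) - \overline{V}_N(t))$ equals $\Tilde{B}^*\bigl[ (\Pi(t) - \Lambda_N(t)) \overline{Z}_t + \Lambda_N(t)(\overline{Z}_t - P_N^* \overline{Z}^N_t) \bigr]$. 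Since $\Tilde{B}^*$ is bounded, it suffices to control the two bracketed terms in $C^2_\mathcal{F}([0,T];\mathcal{H})$. The second term is handled by the uniform operator bound on $\Lambda_N$ combined with Proposition \ref{prop:convergence_closed_loop_state}. The first term is the delicate one: because the argument $\overline{Z}_t$ itself depends on $t$, the pointwise-in-$Z$ statement of Proposition \ref{prop:convergence_of_the_Riccati_operator} cannot be applied directly. I would instead fix $\omega \in \Omega$, note that $t \mapsto \overline{Z}_t(\omega)$ is continuous in $\mathcal{H}$ by the regularity in Proposition \ref{prop:well_posedness_coupled_system}, and apply Gibson's Lemma \ref{lem:gibson_convergence_operator_times_function} with $\Psi_N = \Lambda_N$, $\Psi = \Pi$, and $g(\cdot) = \overline{Z}_{(\cdot)}(\omega)$ to obtain $\sup_t \Vert (\Pi(t) - \Lambda_N(t)) \overline{Z}_t(\omega)\Vert_{\mathcal{H}} \to 0$ for every $\omega$. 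Dominated convergence, justified by the uniform operator bounds and $\overline{Z} \in C^2_\mathcal{F}$, then upgrades this to convergence of the expectation of the squared sup, yielding the first line of \eqref{eq:convergence_optimal_control_and_cost}.

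For $\overline{U}_0^N \to \overline{U}_0$, I would recall that $\overline{U}_0^N$ is the minimizer of the finite-dimensional analog of Proposition \ref{prop:minimal_value_U_0_LQ}, i.e. the quotient $-< \Pi_N(0) P_N L_\rho , P_N M_0 >_{\mathcal{H}_N} / < \Pi_N(0) P_N L_\rho , P_N L_\rho >_{\mathcal{H}_N}$, with $L_\rho, M_0$ as in \eqref{eq:definition_of_L_and_M_conciness_optimal_cost}. Rewriting each inner product in $\mathcal{H}_N$ as one in $\mathcal{H}$ through $\Lambda_N(0) = P_N^* \Pi_N(0) P_N$ and invoking $\Lambda_N(0) L_\rho \to \Pi(0) L_\rho$ from Proposition \ref{prop:convergence_of_the_Riccati_operator}, both numerator and denominator converge to their infinite-dimensional counterparts in \eqref{eq:optimal_initial_condition_control_U_0}. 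Provided $< \Pi(0) L_\rho , L_\rho >_{\mathcal{H}} \neq 0$, continuity of the quotient gives $\overline{U}_0^N \to \overline{U}_0$; the degenerate case is treated exactly as in Proposition \ref{prop:minimal_value_U_0_LQ}.

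Finally, for the cost I would use the Riccati representations of the two minimal values, \eqref{eq:minimal_LQ_cost} and \eqref{eq:minimal_LQ_cost_finite_dim}, evaluated at the optimal initial conditions $\overline{Z}_0 = L_\rho \overline{U}_0 + M_0$ and $\overline{Z}^N_0 = P_N(L_\rho \overline{U}_0^N + M_0)$. Writing the finite-dimensional quadratic form as $< \Lambda_N(0) W_N , W_N >_{\mathcal{H}}$ with $W_N \triangleq L_\rho \overline{U}_0^N + M_0$, the already-proven $\overline{U}_0^N \to \overline{U}_0$ gives $W_N \to \overline{Z}_0$ in $\mathcal{H}$, and combining this with the strong convergence $\Lambda_N(0) \to \Pi(0)$ and the uniform operator bound yields $< \Lambda_N(0) W_N , W_N >_{\mathcal{H}} \to < \Pi(0) \overline{Z}_0 , \overline{Z}_0 >_{\mathcal{H}}$, which is the last line of \eqref{eq:convergence_optimal_control_and_cost}. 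The main obstacle throughout is the control convergence: handling the simultaneous dependence of $\Pi_N(t) \overline{Z}^N_t$ on both $N$ and $t$ is what forces the use of Gibson's Lemma followed by a dominated-convergence argument, whereas the $\overline{U}_0$ and cost statements then follow as routine consequences of the operator convergence at $t = 0$.
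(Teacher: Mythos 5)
Your proof is correct and takes essentially the same route as the paper, whose entire proof is the one-line remark that Proposition \ref{prop:convergence_closed_loop_state} combined with Lemma \ref{lem:gibson_convergence_operator_times_function} yields the results directly; your decomposition of $\delta(\overline{V}-\overline{V}_N)$, the pathwise application of Gibson's Lemma followed by dominated convergence, and the quotient and quadratic-form arguments at $t=0$ are precisely the details that one-liner compresses. If anything, you make explicit the necessary reliance on Proposition \ref{prop:convergence_of_the_Riccati_operator} (both to verify the hypotheses of Gibson's Lemma and to handle $\overline{U}_0^N$ and the cost), which the paper's citation leaves implicit.
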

\begin{proof}
Proposition \ref{prop:convergence_closed_loop_state} combined with Lemma \ref{lem:gibson_convergence_operator_times_function} yields directly the results.
\end{proof}
These convergence results confirm that the easily implementable controls \( \overline{V}_N \) serve as accurate approximations of \( \overline{V} \), as they become sufficiently close for large \( N \).

%%%%%%%%%%%%%%%%%%%%%%%%%%%%%%%%%    Simulations    %%%%%%%%%%%%%%%%%%%%%%%%%%%%%%%%%

\section{Simulations}\label{SIM}

To show the effectiveness of our control method, we consider an academic example within the framework of \eqref{eq:systeme_original_coupled}, with a real valued SDE. In our simulations, we chose for the SDE the parameters $A = 2, B = 2, C = 1, D = 0.5$ and $X_0 = 1$. The PDE parameters are $c = 0.5$ and $u_0 = 1$. Note that with this choice of parameters, both the PDE and SDE are unstable. We apply the control strategy defined in proposition \ref{prop:existence_optimal_LQ_control_finite_dim} with $N=3$, $Q = 10, R = 1, G = 10$ and $ \delta = 0.5$.

\begin{figure}[ht!]
\centering
  \includegraphics[width=0.80\linewidth]{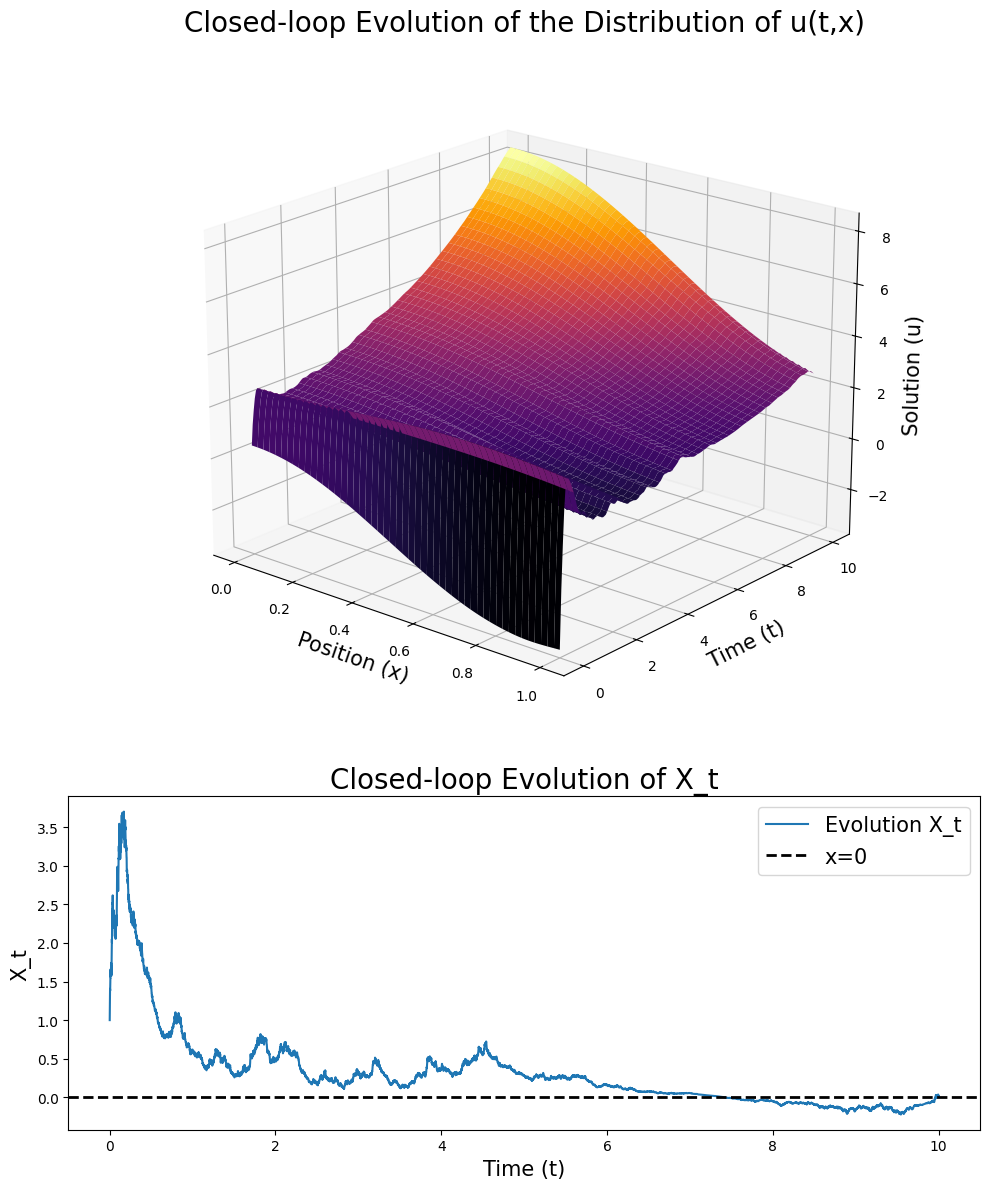}
  \caption{\centering Realization of a closed-loop trajectory with $N=3$}% \textcolor{red}{mettre LQ et non pas LQT}}
  \label{fig:plot_coupled_sys_closed}
\end{figure}

Figure \ref{fig:plot_coupled_sys_closed} shows that despite an initial spike induced by the PDE, the controller effectively stabilizes the state, overcoming instabilities and disturbances. However, the PDE state itself remains unstabilized, which is expected since it is not penalized in the cost function. Instead, it naturally adjusts to minimize the variance of the SDE state.

%%%%%%%%%%%%%%%%%%%%%%%%%%%%%%%%%    Conclusion    %%%%%%%%%%%%%%%%%%%%%%%%%%%%%%%%%

\section{Conclusion and Perspectives}\label{CCL}

In this paper, we introduced a method for the optimal control of an interconnected system composed of a heat PDE cascaded into a linear SDE system. Our approach extends spectral approximation methods developed for parabolic PDE to the stochastic setting. We established the convergence of the approximated optimal controller to the true optimal control of the original system, as well as the convergence of the corresponding closed-loop state dynamics.
Future research directions include extending our framework to more general PDE operators with sufficiently fast-converging finite-dimensional approximations. Additionally, we aim to investigate the case of stochastic systems with possibly nondeterministic coefficients in the SDE, further enhancing the applicability of our results.

%\modifJA{Biblio à reprendre sur la forme.}

\bibliographystyle{ieeetr}
\bibliography{biblio_these_GV_Modif}

\end{document}